\newtheorem{theorem}{Theorem}[section]
\newtheorem{definition}[theorem]{Definition}
\newtheorem{lemma}[theorem]{Lemma}
\def\F{\mathbb{F}}
\title{Fundamental Groups of Random Clique Complexes}
\date{\today}
\begin{document}
\begin{abstract}
Clique complexes of Erd\H{o}s-R\'{e}nyi random graphs with edge probability between $n^{-{1\over 3}}$ and $n^{-{1\over 2}}$ are shown to be aas not simply connected.  This entails showing that a connected two dimensional simplicial complex for which every subcomplex has fewer than three times as many edges as vertices must have the homotopy type of a wedge of circles, two spheres and real projective planes. Note that $n^{-{1\over 3}}$ is a threshold for simple connectivity and $n^{-{1\over 2}}$ is one for vanishing first $\F_2$ homology.  
\end{abstract}
\maketitle
\section{Introduction}
If $n$ is a positive integer and $p\in[0,1]$ is a probability write $K(n,p)$ for the probability measure on 2-dimensional simplicial complexes obtained by taking vertex set $[n]=\{1,\ldots ,n\}$ and edges chosen from all ${n\choose 2}$ possibilities independently each with probability $p$ and all triangles for which all three edges were chosen.  This is the 2-skeleton of the clique complex of the Erd\H{o}s-R\'{e}nyi random graph.  Write aas for asymptotically almost surely where the limit involved is $\lim_{n\rightarrow\infty}$.  

\begin{theorem}
For any $\epsilon>0$ and $n^{\epsilon-{1\over 2}}\leq p_n\leq n^{-\epsilon-{1\over 3}}$ the group $\pi_1(K(n,p_n))$ is aas hyperbolic and nontrivial.  
\end{theorem}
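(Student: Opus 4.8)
The plan is to turn the purely topological structure lemma quoted in the abstract into a statement about $X:=K(n,p_n)$ by controlling its subcomplexes probabilistically, and to apply that control only where it is actually available, namely to subcomplexes of subpolynomial size. Fix a cutoff $N:=n^{\epsilon/2}$. First I would record that $X$ is aas connected, which is immediate since $p_n\gg (\log n)/n$ throughout the stated range. The real input is \emph{local sparsity}: aas every subcomplex of $X$ on at most $N$ vertices has fewer than three times as many edges as vertices. Since a subcomplex is densest when it is the full induced subcomplex on its vertex set, it suffices to bound the chance that some $S$ with $|S|=v\le N$ spans at least $3v$ edges, and a first moment estimate gives
\[ \sum_{v\le N}\binom{n}{v}\binom{\binom{v}{2}}{3v}p_n^{\,3v}\ \le\ \sum_{v\le N}\bigl(c\,v^{2}\,n^{-3\epsilon}\bigr)^{v}\ \longrightarrow\ 0, \]
where $c=e^{4}/216$ and we used $p_n\le n^{-\epsilon-\frac13}$ so that $p_n^{3}\le n^{-1-3\epsilon}$. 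The exponent $-\tfrac13$ is exactly the first moment threshold at which a subgraph of edge density three appears, which is why the upper bound on $p_n$ is the natural one; the $\epsilon$ of slack is precisely what forces convergence and pushes the sparse regime out to polynomial scale $N$.

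With local sparsity in hand I would extract hyperbolicity. Every subcomplex of $X$ of size at most $N$ satisfies the hypothesis of the structural lemma, hence is a wedge of circles, two spheres and projective planes, and in particular carries the combinatorial negative curvature such a wedge forces on a minimal representative: the inequality $e<3v$ is exactly what excludes an embedded torus or higher genus surface (a closed surface of genus $g$ triangulates with $e=3v-6+6g\ge 3v$ once $g\ge1$), and it is such flats that would obstruct hyperbolicity. Consequently any reduced van Kampen diagram over the natural presentation with area at most $N$ is supported on a sparse subcomplex and thus satisfies a linear isoperimetric inequality. A local-to-global argument in the style of Gromov then upgrades a uniform linear bound on all diagrams of bounded area to one on all diagrams, which is Gromov's criterion for $\pi_1(X)$ to be word hyperbolic; the bookkeeping is to check that every diagram one must glue stays within the polynomial range on which sparsity was established.

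For nontriviality I would use that $p_n\le n^{-\epsilon-\frac13}$ sits below the simple connectivity threshold. Concretely I would produce aas a sparse subcomplex $L\subset X$ which the structural lemma identifies as a wedge containing a circle or a projective plane, so that $\pi_1(L)\ne1$, and then argue that the inclusion is $\pi_1$-injective: the negative curvature from the previous step makes such $L$ locally convex, so a shortest essential loop $\gamma$ in $L$ cannot be nullhomotopic in $X$, since any nullhomotopy would be a reduced diagram of area below $N$, hence supported on a sparse subcomplex which the lemma presents as a wedge of spheres, projective planes and circles in which $\gamma$ already bounds, contradicting the choice of $\gamma$. Hence $\pi_1(X)\ne1$, and the hyperbolic structure rules out the degenerate non hyperbolic alternatives. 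I expect this last step to be the genuine obstacle: converting the combinatorial statement ``no small dense subcomplex'' into the homotopy-theoretic statement ``this specific loop admits no nullhomotopy'' requires a careful minimal area diagram argument with uniform area control, ensuring that the diagrams one reasons about never leave the subpolynomial window in which local sparsity, and hence the lemma, is available.
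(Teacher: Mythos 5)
Your proposal's skeleton does match the paper's: a first-moment union bound giving local sparsity (the paper's Lemma 3.9), the structure theorem applied to bounded-size subcomplexes (Theorems 1.2/2.1), and a linear isoperimetric inequality fed into a Gromov-type local-to-global criterion for hyperbolicity (the paper's Lemmas 3.2--3.6 and 3.10). Even in the hyperbolicity step, though, you gloss over the point that does the work: homotopy equivalence to a wedge of circles, spheres and projective planes does not by itself yield a linear isoperimetric inequality with a constant \emph{uniform} over all sparse subcomplexes, and your window $N=n^{\epsilon/2}$ grows with $n$, so ``finitely many complexes to check'' is not automatic. The paper manufactures uniformity from the minimal-filling machinery: by Lemma 3.2, sparsity together with $\chi\leq 1$ and $L\leq 0$ forces $f^1\leq 3/(3\epsilon-1)$, a bound independent of $n$; only then is the family finite (Lemma 3.3), and Lemmas 3.4--3.6 and 3.10 assemble the uniform linear bound.

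The genuine gap is your nontriviality argument, and it is fatal as written. You pick a sparse subcomplex $L$ that is a wedge with $\pi_1(L)\neq 1$, take a shortest essential loop $\gamma$ in $L$, and claim a nullhomotopy of $\gamma$ in $X$ is impossible because it would be supported on a sparse subcomplex $Z$, a wedge ``in which $\gamma$ already bounds, contradicting the choice of $\gamma$.'' There is no contradiction: $\gamma$ bounding in $Z$ is perfectly compatible with $\gamma$ being essential in $L$, since $Z$ is not contained in $L$. A deterministic counterexample to your argument form: let $X$ be the cone over a $4$-cycle, $L$ the boundary $4$-cycle, $\gamma$ the boundary loop. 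Every subcomplex of $X$ is sparse ($X$ itself has $8<3\cdot 5$ edges), $L$ is a wedge of one circle, $\gamma$ is essential in $L$, yet $\gamma$ bounds in $X$, and the bounding disk is itself a sparse, contractible wedge; ``local convexity from negative curvature'' has no content that excludes this. So local sparsity plus the existence of a sparse wedge $L$ simply does not imply $\pi_1$-injectivity, and nothing else in your proposal supplies the missing input. What the paper actually does is an \emph{anchored} count: fix the vertices $1,2,3$ and use that, because $p_n\leq n^{-1/3-\epsilon}$, aas every bounded subcomplex $Y$ containing them satisfies the strictly stronger inequality $f^1Y<3(f^0Y-3)$ (Definition 3.1 and Lemma 3.9 with $r=3$); then take $\gamma$ to be a $6$-cycle alternating through $1,2,3$ --- whose existence is exactly where the lower bound $p_n\geq n^{\epsilon-1/2}$, never used in your proposal, enters via a second-moment (fullness) argument. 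For a minimal filling of such a $\gamma$, the image contains the three anchors, has $\chi\leq 1$ and $L\leq 6$, so Lemma 3.2 with $w=3$ gives $f^1\leq (3-9+6)/(3\epsilon'-1)=0$, contradicting the six edges of $\gamma$ (Lemma 3.7). The $-9$ contributed by the three anchors is precisely what beats the $+6$ from the boundary; an unanchored count of the kind you propose gives $3+6>0$ and can never produce a contradiction, which is why yours does not.
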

This is proven largely by following the notation and blueprint in BHK ([1]).  The main difference here is 
\begin{theorem}
If $X$ is a finite connected two dimensional simplicial complex for which every subcomplex $Y$ has ${f^0Y\over f^1Y}>{1\over 3}$ then $X$ has the homotopy type of a wedge of circles, two spheres and real projective planes and contains a subcomplex with the homotopy type of a wedge of circles and real projective planes for which the inclusion induces an isomorphism of fundamental groups.  
\end{theorem}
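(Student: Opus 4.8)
The plan is to argue by induction on the number of $2$-simplices of $X$, using simplicial collapses to peel off all inessential cells and reducing to a rigid ``core'' whose homotopy type can be read off from an Euler-characteristic count. If $X$ has a free edge (an edge lying in exactly one triangle) or a free vertex (a vertex of degree one), I perform the corresponding elementary collapse. A collapse is a deformation retraction, so it preserves homotopy type, and the resulting complex $X'\subseteq X$ is a subcomplex, hence inherits the hypothesis that every subcomplex $Y$ satisfies $f^1Y<3f^0Y$; moreover the subcomplex realizing $\pi_1$ produced for $X'$ by induction is automatically a subcomplex of $X$ with the same property, since $X'\hookrightarrow X$ is a $\pi_1$-isomorphism. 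This reduces matters to the case in which $X$ has no free faces. If in addition $X$ has no triangles it is a graph with no leaves, hence homotopy equivalent to a wedge of circles and we are done.

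The engine of the argument is the following Euler-characteristic estimate. If $S\subseteq X$ is a subcomplex which is a closed surface then every edge of $S$ lies in exactly two triangles, so $3f^2S=2f^1S$ and hence $f^1S=3(f^0S-\chi(S))$. The hypothesis $f^1S<3f^0S$ then forces $\chi(S)>0$, so $S$ is either $S^2$ or $\mathbb{RP}^2$; a torus or any surface of nonpositive Euler characteristic is forbidden because it would give $f^1S\ge 3f^0S$. I would then upgrade this to branched configurations: if $q\colon S\to Z'\subseteq X$ is a simplicial quotient of a closed surface onto a subcomplex, each elementary identification of two vertices or of two edges strictly increases the quantity $f^1-3f^0$, so $f^1Z'-3f^0Z'\ge f^1S-3f^0S=-3\chi(S)$, and sparseness of $Z'$ again yields $\chi(S)>0$. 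Thus every closed surface that maps into $X$, even with self-identifications, must be a sphere or a projective plane.

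It remains to treat the $2$-dimensional core $Z$ (the subcomplex generated by the triangles), in which every edge now lies in at least two triangles. The plan here is to normalize $Z$ along its singular edges and vertices, producing a disjoint union $\widehat Z$ of closed surfaces together with a quotient map $\widehat Z\to Z$ that only identifies vertices and edges; by the previous paragraph each component of $\widehat Z$ is an $S^2$ or an $\mathbb{RP}^2$, and since the identified cells are contractible the map exhibits $Z$ as a wedge of these surfaces with finitely many extra circles. Combined with the graph part this gives the asserted homotopy type. The hard part will be this normalization step: edges may lie in more than two triangles and vertices may have disconnected links, and one must rule out---or directly contract---non-pseudomanifold, non-collapsible configurations (for instance dunce-hat-type pieces) that are not unions of closed surfaces. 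I expect that it is precisely the \emph{strict} inequality $f^1Y<3f^0Y$, applied to every singular subcomplex, that forbids such configurations and guarantees that the core resolves into closed surfaces of positive Euler characteristic.

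Finally, for the $\pi_1$-statement I would take $W\subseteq X$ to be the union of a spanning graph carrying the free circles together with the projective-plane components of the core, discarding exactly the $2$-cells that generate the spheres. Since a $2$-sphere is simply connected, deleting the $2$-cells of a sphere summand changes only $\pi_2$, not $\pi_1$; a van Kampen argument along the contractible intersections then shows that the inclusion $W\hookrightarrow X$ induces an isomorphism of fundamental groups, and $W$ has the homotopy type of a wedge of circles and projective planes as required.
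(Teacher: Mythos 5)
Your Euler-characteristic computation for closed surface subcomplexes ($3f^2S=2f^1S$ forces $f^1S=3(f^0S-\chi S)$, so the hypothesis gives $\chi S>0$) is correct, and it is indeed the mechanism that singles out $S^2$ and $\mathbb{RP}^2$; the collapsing preamble and the graph case are also fine. But the entire weight of the theorem rests on the step you explicitly defer: that the collapsed core ``resolves into closed surfaces.'' That is not a technicality to be checked later --- it is the whole content of the result, and it is where the paper spends essentially all of its effort: the web machinery imported from BHK, the admissibility invariant $(L+3\chi)Y>0$, the partial order and minimal-counterexample argument, the normalization/zipping moves forcing $2$-normality, the digon/monogon collapses, the face-sliding lemmas, and the local curvature analysis of $K_v$, $K_c$, $K_m$. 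Your proposal replaces all of this with ``I expect that the strict inequality forbids such configurations,'' which is a hope, not a proof.

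Moreover, the structure you propose to produce does not exist in general. Take vertices $a,b,c,x_1,x_2,x_3$ and the nine triangles $x_iab$, $x_ibc$, $x_ica$ ($i=1,2,3$): three simplicial disks glued along a common boundary triangle. Here $f^0=6$, $f^1=12$, every subcomplex satisfies $f^1Y<3f^0Y$, there are no free faces, and the complex is homotopy equivalent to $S^2\vee S^2$ (so the theorem holds for it). But the edge $ab$ lies in exactly \emph{three} triangles, so there is no disjoint union of closed surfaces $\widehat Z$ mapping onto it by a quotient that identifies only vertices and edges: such a map would be a bijection on triangles, each preimage of $ab$ would lie in exactly two triangles of $\widehat Z$, and these map to distinct triangles of $X$ at $ab$, giving the contradiction $2k=3$. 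Any decomposition of this complex into spheres (say $D_1\cup D_2$ and $D_2\cup D_3$) necessarily has the surfaces overlapping in an entire disk, not in vertices and edges. This parity obstruction is precisely why the paper leaves the simplicial category and works with webs, where a $2$-cell can be slid across another and digons can be collapsed while the invariant $(L+3\chi)Y>0$ is preserved; no purely local, purely simplicial normalization of the kind you describe can succeed. (A smaller slip: identifying two edges whose endpoints have already been identified \emph{decreases} $f^1-3f^0$ by one, so your claim that every elementary identification strictly increases that quantity is false as stated, and the inequality $f^1Z'-3f^0Z'\ge -3\chi(S)$ for branched images needs an actual argument.)
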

Here $f^iY$ is the number of $i$-dimensional faces in $Y$.  This  is a corollary of theorem 2.1 and replaces BHK Lemma 4.1 in which ${f^0Y\over f^1Y}>{1\over 3}$ is replaced by ${f^0Y\over f^2Y}>{1\over 2}$.

Note that if $p_n\leq n^{-1-\epsilon}$ then $K(n,p_n)$ is aas a disconnected forest and if $n^{-1+\epsilon}\leq p_n\leq n^{-{1\over 2}-\epsilon}$ then by Lemma 3.8 $K(n,p_n)$ is aas connected and collapsible to a graph with cycles.  If $n^{-{1\over 3}+\epsilon}\leq p_n$ then $K(n,p_n)$ is aas simply connected from Kahle's [3] Theorem 3.4.

\section{Definitions}
Recall webs from BHK Definition 4.5, $L$ from Definition 4.6 and modify Definition 4.7 to call a web $W$ $k$-admissible if every $Y\subseteq W$ has $(L+k\chi)Y>0$.  
Note that BHK studies $2$-admissible webs and this note studies $3$-admissible ones.  

\begin{theorem}(Related to Lemma 4.16 of BHK)
If $W$ is a connected $3$-admissible $2$-dimensional web with $g(W)\geq 3$ then $|X|$ has the homotopy type of a wedge of circles, two spheres and real projective planes.  
\end{theorem}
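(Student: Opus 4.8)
The plan is to follow the inductive scheme behind BHK's Lemma 4.16, but to account for the extra Euler-characteristic slack that $3$-admissibility (as opposed to $2$-admissibility) permits, and to show this slack is realized in the homotopy type of $|X|$ only by real projective planes. The starting point, to be read off from BHK Definition 4.6, is the identity that for every subweb $Y$ one has $(L+3\chi)Y = 3f^0Y - f^1Y$ (and likewise $(L+2\chi)Y = 2f^0Y - f^2Y$, recovering BHK); thus $3$-admissibility says exactly that every subweb has more than one third as many vertices as edges, the hypothesis of Theorem 2.2. I would then induct on the number of triangles $f^2(W)$, the base case $f^2=0$ being a connected graph, homotopy equivalent to a wedge of circles.

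First I would pass to a core by elementary collapses. Any edge lying in exactly one triangle, together with that triangle, is a free pair; collapsing it is a homotopy equivalence and leaves a subcomplex of $W$, so $3$-admissibility, being hereditary, is preserved while $f^2$ drops. Stripping pendant vertices and edges in no triangle likewise preserves homotopy type and admissibility, and each independent cycle of the discarded graph part records a circle summand. After these moves every surviving edge lies in at least two triangles, so $L \le 0$ on every closed piece and the inequality $3\chi \ge (L+3\chi) > 0$ forces $\chi > 0$ there.

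The heart of the argument is this core. On a piece that is a closed pseudo-surface, where every edge lies in exactly two triangles, one has $f^0/f^1 = \chi/f^1 + {1\over 3}$, so $\chi>0$ forces the surface to be $S^2$ or $\mathbb{RP}^2$, these being the only closed surfaces of positive Euler characteristic; the hypothesis $g(W)\ge 3$ excludes the degenerate few-triangle configurations and guarantees a genuine sphere or cross-cap. Singular vertices, where the link is several circles, are cut points: I split the web there into subwebs, which are again admissible because they are subcomplexes, realizing $W$ up to homotopy as a wedge and invoking van Kampen to write $\pi_1(W)$ as the corresponding free product. Each $S^2$ or $\mathbb{RP}^2$ piece is then detached as a wedge summand by the standard move of deleting one of its open triangles, which exposes a free face and collapses the remainder onto the rest of the web up to that summand, leaving a smaller $3$-admissible web to which induction applies; the $\mathbb{RP}^2$ pieces contribute precisely the $\Z/2$ factors of $\pi_1$.

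The main obstacle I anticipate is not the Euler-characteristic bookkeeping but the gluing analysis: I must show that each maximal closed-surface piece $\Sigma$ meets the rest of the web along a contractible subcomplex, a forest, so that it genuinely wedges off rather than being sewn on along an essential cycle, which could produce a Klein bottle or a higher-genus surface and destroy the claimed homotopy type. This is exactly where $3$-admissibility must be applied locally, to bound the overlap $\Sigma \cap \overline{W\setminus\Sigma}$, and where $g(W)\ge 3$ does the work of forbidding short identifications; edges lying in three or more triangles must be resolved by the same cutting moves while checking that the pieces stay in the class. I expect the bulk of the write-up to consist of verifying that every reduction — collapsing, cutting at singular vertices, and deleting a closed-surface summand — keeps us inside the class of connected $3$-admissible webs with $g\ge 3$, so that the induction closes.
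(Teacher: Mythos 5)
Your opening reductions are sound: for simplicial webs the identity $(L+3\chi)Y=3f^0Y-f^1Y$ is correct, free-pair collapses and tree-stripping stay inside the hereditary class, and $L\le 0$ together with admissibility does force $\chi>0$ on closed pieces. The problem is the step you defer to the end, and it is not merely unproven --- the structural claim you propose to verify is false. Take the $6$-vertex triangulation of $\mathbb{RP}^2$ (the antipodal quotient of the icosahedron), whose $1$-skeleton is $K_6$ and whose non-facial $3$-cycles are all essential, and adjoin one extra $2$-simplex on such a non-facial triple. Every subcomplex has at most $6$ vertices, hence at most ${6 \choose 2}=15$ edges, so $3f^0-f^1\ge 18-15=3>0$ throughout: this is a connected $3$-admissible web of girth $3$. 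It is simply connected (the new cell kills the generator of $\Z/2$) with the homology of $S^2$, hence homotopy equivalent to $S^2$, so the theorem holds for it; but its unique closed-surface subcomplex, the $\mathbb{RP}^2$, meets the closure of its complement in an essential circle, not a forest, and no surface wedges off. Worse, none of your moves touches this complex: every edge lies in $2$ or $3$ triangles, there are no pendant trees, and all vertex links are $2$-connected, so your induction stalls exactly here. The example shows that the spheres in the conclusion need not occur as subcomplexes at all; they can arise only after recombining a projective plane with a disk glued along an essential curve, which is invisible to any argument whose only moves are passage to subcomplexes and collapses.

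This is precisely the difficulty the paper's proof is engineered around, and it uses machinery of a different kind: a counterexample minimal in the lexicographic order on $(f^2W,-(L+3\chi)W,f^1W,f^0W,-K_{v_1},\dots)$, moves that genuinely modify the web rather than shrink it --- normalization, zipping of cut points in links, monogon and digon collapse, and above all the slides of a $2$-face across a digon or a triangle in a vertex link (Lemmas 2.10 and 2.11) --- followed by a discharging argument on the identity $(L+3\chi)W=\sum_v K_v+\sum_c K_c+\sum_m K_m$, which shows that in a minimal counterexample every $K_v$ and $K_c$ is nonpositive, so some boundaryless $2$-face has $K_m=3\chi(m)>0$ and $|W|$ itself is a sphere or projective plane. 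Note also that those slides and collapses leave the simplicial category, which is why the statement is about webs in the first place: there edges carry lengths $\mu$ and faces attach along longer words, so your formula $(L+3\chi)Y=3f^0Y-f^1Y$ no longer holds verbatim. Any repair of your induction would likewise have to work in this larger category and would need moves capable of trading a projective plane plus a disk for a sphere, i.e., something equivalent to the paper's sliding and discharging apparatus.
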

\begin{proof}
\begin{lemma} If the theorem fails then there is a counterexample $W$ with $\delta(W)\geq 2$.
\end{lemma}
\begin{proof}
Recall from BHK the Definition 4.14 of $K(W)$ which has $\delta(K(W))\geq 2$.  $K(W)$ is also a counterexample no larger than $W$.  
\end{proof}

As in the proof of Lemma 4.22 in BHK write the admissibility sum locally as $(L+3\chi)W=\sum_v K_v +\sum_c K_c +\sum_m K_m$ where the sums are over faces of $W$ with empty boundary and dimensions zero, one and two respectively.  

\begin{definition}
Partially order connected two dimensional webs with $\delta\geq 2$ by setting $W\leq W'$ if $$(f^2W, -(L+3\chi)W, f^1W, f^0W, -K_{v_1},\ldots -K_{v_{f^0W}})$$ $$\leq(f^2W', -(L+3\chi)W', f^1W', f^0W', -K_{v'_1},\ldots -K_{v'_{f^0W'}})$$ lexicographically where $v_1\ldots$ are the vertices of $W$ with $K_{v_i}\geq K_{v_{i+1}}$ and similarly for $W'$.  
\end{definition}

Note that if there is a counterexample to the theorem in this partial order there is also a minimal one. 
Choose $W$ to be a minimal counterexample. 

\begin{definition}
Call a two dimensional complex normal if all vertex links are connected graphs and 2-normal if all vertex links are 2-connected graphs.
\end{definition}
\begin{lemma}(Related to BHK 4.18) If $W$ is a minimal counterexample then $\delta(W)\geq 3$ and $W$ is $2$-normal.  
\end{lemma}
\begin{proof}
If $W$ is not normal choose $N_v(W)$ a disconnected link at $v$ of $W$ and consider 
$f:W'\rightarrow W$ the normalization map and note that every subweb $Y'\subseteq W'$ has $LfY'=LY'$ and $\chi fY' \leq\chi Y'$ so that $W'$ is also admissible and $W'<W$ in the above order since $f^2W'=f^2W$, $LW'=LW$ and $\chi W'=\chi W-1$.  Thus some component of $W'$ is a smaller counterexample since $|W|$ is the wedge of the components of $|W'|$ and some possibly circles.  

If $W$ is not 2-normal choose $N_v(W)$ with a cut point and consider the zipping map $z:W'\rightarrow W$ so that $W'$ has one more vertex and one more edge than $W$.  Note that for any $Y'\subseteq W'$ there is $LY'\geq LzY'$ and $\chi Y'\geq \chi zY'$ so that $W'$ is also admissible and $W'<W$ in the above partial order since $f^2W'=f^2W$, $\chi W'=\chi W$, $LW'=LW+2\mu e$.  Since $|z|$ is a homotopy equivalence this makes $W'$ a smaller counterexample.  

Recall from BHK Lemma 4.11 the Definition of $C(W)$, which is defined for $2$-normal webs is again a counterexample and $W\leq CW$, so $W=CW$ and hence $\delta(W)\geq 3$.  
\end{proof}

\begin{lemma}(Related to BHK 4.20) If $W$ is a minimal counterexample then $W$ has no monogons or digons.  
\end{lemma}
\begin{proof}
If $F$ is a digon in $W$ with edges $e$ and $f$ having $\mu e \leq \mu f$ consider the 
collapse $W'$ to the shorter edge $e$ and the collapse map $\phi:W\rightarrow W'$.  
Note that for every subweb $Y'\subseteq W'$ there is $Y=\phi^{-1}Y'$ and sometimes 
$Y_e=Y-\{e,F\}$ or $Y_f=Y-\{f,F\}$ are also subwebs.  Each of these has $\chi Y_{(?)}=\chi Y'$ and at least one of $LY_{(?)}\leq LY'$ so that $W'$ is also admissible and $W'<W$ in the above order since $f^2W'=f^2W-1$.   

If $F$ is a monogon in $W$ with edge $e$ consider the collapse $\phi:W\rightarrow W'$ of $F$ to a point.  
Note that for every subweb $Y'\subseteq W'$ there is $Y=\phi^{-1}Y'$ and sometimes 
$Y_e=Y-\{e,F\}$ is also a subweb.  Both of these has $\chi Y_{(e)}=\chi Y'$ and at least one of $LY_{(e)}\leq LY'$ so that $W'$ is also admissible and $W'<W$ in the above order since $f^2W'=f^2W-1$.   
\end{proof}

\begin{lemma}
If $W$ is a minimal counterexample with $v$ a vertex, $e$ and $f$ edges containing $v$, $c$ a circular 1-face, $F$ a 2-face containing $e$ and $f$ and $G$ a 2-face containing $c$ then each of the following variables is a non negative integer:
\newline $\hat{f^1}v=f^1v-3$, 
\newline $\hat{f^2}e=f^2e-3$, 
\newline $\hat{\mu}e=\mu e-1$, 
\newline $\hat{\chi}F=-\chi F+1$, 
\newline $\hat{a}(v,e,f,F)=\hat{\mu} e + \hat{\mu} f$,
\newline $\hat{m}(v,e,f,F)=\mu\partial F-\hat{a}(v,e,f,F)-3$, 
\newline $\hat{\mu\partial}(c,G)=\mu\partial G-f^Gc\mu c$ and
\newline $\hat{f^G}c=f^Gc-1$.
\end{lemma}
Here $f^im$ is the number of $i$-dimensional faces containing the face $m$ and $f^Gc$ is the degree of the map from the boundary of $G$ to $c$.  See BHK.  

Note that if $v$ is a vertex of $W$ then using $$\sum_{\{e|v\in e\}}\hat{\mu}e=-\sum_{\{e|v\in e\}}{1\over 3}\hat{\mu}e\hat{f^2}e + \sum_{\{e,f,F|v\in e\in F,v\in f\in F, e\not= f\}}{1\over 3}\hat{a}(v,e,f,F)$$ yields
$$K_v$$ 
$$= 3-{3\over 2}f^1v+\sum_{\{(e,f,F)|v\in e,f\in F\}}{3\chi F {1\over 2}(\mu e+\mu f)\over\mu\partial F} + \sum_{\{e|v\in e\}}\mu e -\sum_{\{(e,f,F)|v\in e,f\in F\}}{1\over 2}(\mu e+\mu f)$$
$$= {3\over 2} -{1\over 2} \hat{f^1}v - \sum_{\{e|v\in e\}}{1\over 3}\hat{\mu}e\hat{f^2}e $$
$$- \sum_{\{(e,f,F)|v\in e,f\in F\}}{3\hat{\chi}F + \hat{m}(v,e,f,F) +\hat{a}(v,e,f,F)[{3\over 2}\hat{\chi}F + {1\over 6}\hat{m}(v,e,f,F) + {1\over 6}\hat{a}(v,e,f,F)]\over 3 + \hat{m}(v,e,f,F) + \hat{a}(v,e,f,F)}$$
$$={3\over 2} -{1\over 2} \hat{f^1} - \sum_{\{e|v\in e\}}{1\over 3}\hat{\mu}\hat{f^2} - \sum_{\{(e,f,F)|v\in e,f\in F\}}{3\hat{\chi} + \hat{m} +\hat{a}[{3\over 2}\hat{\chi} + {1\over 6}\hat{m} + {1\over 6}\hat{a}]\over 3 + \hat{m} + \hat{a}}.$$

Similarly, if $c$ is a circular one dimensional face of $W$ then $$K_c=\mu c\left[2-\sum_{\{G_c\in G\}}f^Gc{(\hat{f^G}c+\hat{\chi}G)\mu c+\hat{\mu\partial}(c,G)\over f^Gc\mu+\hat{\mu\partial}(c,G)}\right].$$   

Finally if $m$ is two dimensional with empty boundary then $$K_m=3\chi(m).$$  

Since $(L+3\chi)W>0$ there is some face $F$ with empty boundary and $K_F>0$.  

If $m$ is $2$ dimensional, without boundary and $K_m>0$  
then $\chi(W)>0$ so $|W|$ is a sphere or projective plane.  
\begin{lemma}
If $W$ is a minimal counterexample and $c$ is a circular face then $K_c\leq 0$.
\end{lemma}
\begin{proof}Assume $K_c>0$.  If $G$ contains $c$ and $\hat{\mu\partial}(c,G)=0$ then the contribution of $G$ to 
${K_c\over \mu c}$ is $-\hat{f^G}c-\hat{\chi}G$ so only twice wrapped disks ($\hat{f}=1$, $\hat{\chi}=0$) cross caps ($\hat{f}=0$, $\hat{\chi}=1$) and singly wrapped disks ($\hat{f}=0$, $\hat{\chi}=0$) can occur if $K_c$ is to be positive.  
If $\hat{\mu\partial}(c,G)\not=0$ then $\hat{\mu\partial}(c,G)\geq g(W)\geq 3$ and $\hat{\chi}G\geq 1$ so that $\hat{f^G}c=0$.  
The only faces which do not subtract at least one are the singly wrapped disks but if $W$ is a minimal counterexample and $c$ a circular face there is at most one of these.  

This leaves only the case of one doubly wrapped and one singly wrapped disk, which has the homotopy type of a sphere and is therefore not a counterexample.    
\end{proof}
\begin{lemma}
If $W$ is a minimal counterexample and $v$ is a vertex then $K_v\leq 0$.
\end{lemma}
\begin{proof}Assume that $v$ is a vertex and $K_v>0$.  

\begin{lemma}(only long double edges in links)
If $W$ is a minimal counterexample, $K_v\geq K_u$ for every $u$ adjacent to $v$ and there are edges $e$ and $f$ and $2$-faces $F$ and $G$ with $F$ and $G$ forming a double edge connecting $e$ and $f$ in the link of $v$ then $\mu\partial F> 2(\mu e+\mu f)$ (or equivalently $\hat{m}F> \hat{\mu}e +\hat{\mu}f+1$).
\end{lemma}

Note that this implies that every double edge subtracts at least ${4\over 5}$ from $K_v$.

\begin{proof}
Assume not and consider $j:W''\rightarrow W$ the deletion of $G$ and $i:W''\rightarrow W'$ the addition of $G'$ which slides $G$ across $F$.  Note that $|W|$ and $|W'|$ are homotopy equivalent and if $G'\in Y'\subseteq W'$ then $(L+3\chi) Y'\geq (L+3\chi) Y$ for either $Y=Y'-G'+G$ or 
$Y=Y'-G'+G+F$ so that $W'$ is admissible.  The former works if $e$ and $f$ are in $Y'$, in which case $\chi Y'=\chi Y$ and $LY'=LY-\mu\partial F+2\mu e+2\mu f\geq LY$.  Otherwise the latter works, with four cases depending on the intersection of $Y'$ with $e$ and $f$.  If the intersection is empty then  $LY'=LY$ and $\chi Y'=\chi Y$.  If the intersection is only $v$ then $LY'=LY$ and $\chi Y'=\chi Y+1$.  If the intersection is an edge (wlog $e$) then $LY'=LY+2\mu e$ and $\chi Y'=\chi Y$.  Also $X'<X$ in the above order since $f^2X'=f^2X$, $\chi X'=\chi X$, $LX'=LX+\mu\partial F-2\mu e-2\mu f\leq LX$ and $K_{v'}>K_v$.  
\end{proof}
\begin{lemma}(only long triangles in links) If $W$ is a minimal counterexample, $K_v\geq K_u$ for every $u$ adjacent to $v$ and there are edges $e$, $f$ and $g$ and $2$-faces $E$, $F$ and $G$ with $E$, $F$ and $G$ forming the edges and $e$, $f$ and $g$ the vertices of a triangle in the link of $v$ then $mF+mE-2\mu g> 2(\mu e +\mu f)$ (or equivalently $\hat{m}E+\hat{m}F> \hat{\mu}e+\hat{\mu}f+2$).
\end{lemma}

Note that this implies that every triangle in the link of $v$ subtracts at least ${3\over 4}$ from $K_v$ and every square with diagonal subtracts at least ${6 \over 5}$.

\begin{proof}
Assume not and consider $j:X''\rightarrow X$ the deletion of $G$ and $i:X''\rightarrow X'$ the addition of $G'$ which slides $G$ across $E$ and $F$.  Note that $|X|$ and $|X'|$ are homotopy equivalent and if $G'\in Y'\subseteq X'$ then $(L+3\chi)Y'\geq (L+3\chi)Y$ for $Y=Y'-G'+G$ or $Y=Y'-G'+G+F+E$.  Also $X'<X$ in  the above order since $f^2X'=f^2X$, $\chi X'=\chi X$, $LX'\geq LX$ and $K_{v'}>K_v$.  
\end{proof}

A case analysis now eliminates any minimal counterexample, proving Lemma 2.9.  
\end{proof}

This completes the proof of Theorem 2.1. 
\end{proof}

\smallskip\noindent{\it Proof of Theorem 1.2:}
The first part follows from Theorem 2.1.
Since Theorem 2.1 also holds for subcomplexes the argument in the proof of Theorem 4.1 in BHK completes the proof.  
\hfill$\square$\medskip

\section{fundamental groups}
The fundamental group restriction is much like in BHK.  
\begin{definition}
If $X$ is a 2-dimensional connected simplicial complex then 
$$ e^0_1X=\hbox{min}_{Y\subseteq X}{f^0Y\over f^1Y}$$
if also $X$ contains the vertices $\{1,\ldots w\}$ then 
$$ e^0_1X_w=\hbox{min}_{\{1,\ldots, w\}\subseteq Y\subseteq X}{f^0Y-w\over f^1Y}.$$
\end{definition}
This is similar to $e$ in BHK, but involves the ratio of vertices to edges rather than to $2$-faces.  

\begin{lemma} If $X$ is a 2-dimensional connected simplicial complex with $e^0_1X_w>{1\over 3}$ then
$$f^1X\leq {3\chi X-3w+LX\over 3e^0_1X_w-1}.$$
\end{lemma}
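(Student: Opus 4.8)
The plan is to reduce the stated bound to a single instance of the definition of $e^0_1X_w$, after rewriting $\chi X$ and $LX$ purely in terms of face counts. The one ingredient I need beyond the definitions is the value of $L$ on a simplicial complex, namely $LX=2f^1X-3f^2X$, and once that is in hand the rest is bookkeeping.

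I would first establish this value of $LX$. A simplicial complex, viewed as a web, has every edge of length $\mu e=1$ and every $2$-face a triangle with $\mu\partial F=3$, and has no circular $1$-faces and no boundaryless $2$-faces, so $(L+3\chi)X=\sum_v K_v$ with the $K_c$ and $K_m$ sums empty. Summing the displayed first form of $K_v$ over all vertices, the length sums give $\sum_v\sum_{\{e\mid v\in e\}}\mu e=2f^1X$ and $\sum_v\sum_{\{(e,f,F)\}}\frac12(\mu e+\mu f)=\sum_F\mu\partial F=3f^2X$, while $\sum_v 3=3f^0X$, $-\sum_v\frac32 f^1v=-3f^1X$, and the angle term $\sum_v\sum_{\{(e,f,F)\}}\frac{3\chi F\,\frac12(\mu e+\mu f)}{\mu\partial F}=3\sum_F\chi F=3f^2X$. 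The last three assemble as $3f^0X-3f^1X+3f^2X=3\chi X$, so that $LX=2f^1X-3f^2X$. (Equivalently one may simply cite BHK Definition 4.6 evaluated on a simplicial complex.)

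Next I would invoke the hypothesis. Since $e^0_1X_w$ is the minimum of $\frac{f^0Y-w}{f^1Y}$ over all $Y$ with $\{1,\ldots,w\}\subseteq Y\subseteq X$, the choice $Y=X$ gives $f^0X-w\geq e^0_1X_w\,f^1X$. This is the only place the minimum enters: passing from the ratio of $X$ itself down to the minimum only shrinks $3e^0_1X_w-1$ and hence only weakens the inequality to be proved, so no other subcomplex is required. The point of phrasing the hypothesis through the minimum is rather that $e^0_1X_w$ is the quantity controlled globally in the random-complex application.

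Finally I would substitute. Using $\chi X=f^0X-f^1X+f^2X$ together with the value of $LX$, the numerator collapses to
$$3\chi X-3w+LX=3f^0X-3f^1X+3f^2X-3w+2f^1X-3f^2X=3(f^0X-w)-f^1X.$$
Applying the previous step, $3(f^0X-w)-f^1X\geq 3e^0_1X_w\,f^1X-f^1X=(3e^0_1X_w-1)f^1X$, and since the hypothesis $e^0_1X_w>\frac13$ forces $3e^0_1X_w-1>0$, dividing yields the claim. The computation is routine once the value of $LX$ is fixed; the only step where I expect to need care is matching the corner and angle sums in the $K_v$ identity so that the $f^2X$ contributions cancel and leave no residual dependence — this bookkeeping, rather than any inequality, is the likely source of error.
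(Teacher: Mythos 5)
Your proposal is correct and is essentially the paper's own argument: the paper simply defers to BHK Lemma~5.1, whose proof is exactly this substitution — use $LX=2f^1X-3f^2X$ (the formula the paper itself invokes in Lemma~3.5) and $\chi X=f^0X-f^1X+f^2X$ to collapse the numerator to $3(f^0X-w)-f^1X$, bound $f^0X-w\geq e^0_1X_w\,f^1X$ by taking $Y=X$ in the minimum, and divide by the positive quantity $3e^0_1X_w-1$. Your derivation of the value of $LX$ from the $K_v$ decomposition is more work than needed, but as you note it can be replaced by citing BHK Definition~4.6 directly.
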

\begin{proof}
See the proof of BHK Lemma 5.1.
\end{proof}

\begin{lemma}
For every $e>{1\over 3}$ there is $\beta$ so that every connected $2$-complex with $e^0_1(X)>e$, $L(X)\leq 0$ and $\chi(X)\leq 1$ and any contractible loop $\gamma:C_r\rightarrow X$ satisfies $A(\gamma)<\beta L(\gamma)$.  
\end{lemma}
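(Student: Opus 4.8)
The plan is to run the isoperimetric argument of BHK in Section~5, with the role of their $f^0/f^2>{1\over 2}$ played by our $f^0/f^1>{1\over 3}$ and the role of their Lemma 5.1 played by the preceding lemma. First I would fix a contractible $\gamma:C_r\to X$ and pass to a minimal-area van Kampen diagram $\phi:D\to X$ filling it, so that $D$ is a disk ($\chi D=1$), its number of $2$-faces is exactly $A(\gamma)$, and its boundary cycle $\partial D$ has length $L(\gamma)$. Minimality guarantees $D$ is reduced, which is what lets the local geometry of $X$ be imported into $D$: the interior links of $D$ map to links in $X$, so the local curvatures $K_v,K_c,K_m$ computed on the interior of $D$ agree with those in $X$ and are therefore $\le 0$ by the bounds $K_v\le 0$, $K_c\le 0$, $K_m=3\chi(m)$ established in the proof of Theorem 2.1. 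Integrated over the disk this is exactly the statement $LD\le 0$ (together with $\chi D=1$).

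The engine is the preceding edge-count lemma, but to make length appear in its numerator I would first cap $D$: glue a single $2$-face $F_\gamma$ with $\mu\partial F_\gamma=L(\gamma)$ along $\partial D$ to form $\widehat D$. Capping adds no vertices or edges, so $f^0\widehat D=f^0 D$, $f^1\widehat D=f^1 D$, and in particular $e^0_1\widehat D=e^0_1 D$, while $\chi\widehat D=\chi D+1=2$ and $L\widehat D=LD+(\hbox{contribution of }F_\gamma)$, the contribution of a single face of perimeter $L(\gamma)$ being linear in $L(\gamma)$. Assuming the sparseness transfers as $e^0_1 D\ge e^0_1 X>e>{1\over 3}$, so that $3e^0_1\widehat D-1>0$, the preceding lemma applied to $\widehat D$ with $w=0$ gives
$$f^1 D=f^1\widehat D\le {3\chi\widehat D+L\widehat D\over 3e^0_1\widehat D-1}\le{6+LD+c\,L(\gamma)\over 3e-1}$$
for a constant $c$ coming from the definition of $L$. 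Using $LD\le 0$ and $A(\gamma)=f^2 D=\chi D-f^0 D+f^1 D\le 1+f^1 D$, I would conclude $A(\gamma)\le\beta L(\gamma)$ with $\beta=\beta(e)$ of order $1/(3e-1)$, absorbing the additive constants via $L(\gamma)\ge 1$ for a nontrivial loop.

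The main obstacle is the transfer step: the hypotheses $e^0_1X>e$, $LX\le 0$, $\chi X\le 1$ live on $X$, whereas the quantity to be bounded, $A(\gamma)=f^2 D$, lives on the abstract diagram $D$, which maps to $X$ with multiplicity. I expect the real work to be in showing that reducedness forces $\phi$ to be injective on a ratio-minimizing subcomplex of $D$, so that $e^0_1 D\ge e^0_1 X$ and the interior curvatures of $D$ coincide with those of $X$ (equivalently $LD\le 0$); this is the analogue of the passage from $X$ to the diagram in BHK. The remaining points—the exact bookkeeping for how $F_\gamma$ changes $L$ and $\chi$ under capping, and the verification that a minimal diagram achieves $A(\gamma)=f^2 D$ rather than merely bounding it—are routine and parallel the corresponding steps in BHK.
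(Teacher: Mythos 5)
Your proposal takes a genuinely different route from the paper, and its central step fails. The paper's proof (following BHK Lemma 5.2) is a compactness argument that never looks at van Kampen diagrams: the hypotheses $\chi(X)\le 1$, $L(X)\le 0$ and $e^0_1(X)>e$ are fed into Lemma 3.2 (with $w=0$) to give $f^1X\le {3\over 3e-1}$, so up to isomorphism there are only finitely many complexes $X$ satisfying the hypotheses; by Theorem 2.1 each such $X$ is homotopy equivalent to a wedge of circles, two-spheres and projective planes, so $\pi_1(X)$ is hyperbolic and each individual $X$ satisfies a linear isoperimetric inequality $A(\gamma)<\beta_X L(\gamma)$; one takes $\beta=\max_X\beta_X$ over the finite list. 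Note that your argument never actually uses the hypotheses $L(X)\le 0$ and $\chi(X)\le 1$ (your ``$LD\le 0$'' is asserted about the diagram, not about $X$); those hypotheses are precisely what powers the finiteness argument, and their absence is a sign the approach has drifted away from what this lemma is for.

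The fatal gap is the transfer step you yourself flag as ``the real work'': minimality and reducedness of a van Kampen diagram do not make $\phi$ injective, and do not give $e^0_1 D\ge e^0_1 X$. A reduced diagram can be arbitrarily much larger than its image (e.g.\ a large disk wrapping many times over a handful of faces of $X$), and the only bound that holds automatically comes from planarity: every subcomplex of $D$ is a planar graph, hence satisfies $f^1\le 3f^0-6$, i.e.\ $e^0_1\widehat D>{1\over 3}$ --- but with no uniform gap above ${1\over 3}$. Since your final constant is of order $1/(3e^0_1\widehat D-1)$, the bound from Lemma 3.2 applied to $\widehat D$ degenerates exactly when the diagram is large, which is the case you need to exclude; the argument is in effect circular. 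Relatedly, the inequalities $K_v\le 0$ and $K_c\le 0$ that you import are proved in the paper only for a minimal counterexample web inside the proof of Theorem 2.1; they are not valid local statements about arbitrary complexes or diagrams, so they cannot be ``computed on the interior of $D$.'' The genuine diagram-to-image comparison in this circle of ideas is the machinery of minimal fillings and 1-immersions (Lemmas 3.4--3.7 here, following BHK 5.4--5.10), where sparsity is applied to the subcomplex $\mathrm{Im}(\pi)\subseteq X$ and the multiplicity of the map is controlled separately; that is a different, and substantially harder, statement than the present lemma.
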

\begin{proof}  
See the proof of BHK 5.2.  In this case the bound on $f^1$ from Lemma 3.2 replaces that on $f^2$ to yield only finitely many complexes to check.  
\end{proof}

\begin{lemma}
For every $\epsilon>{1\over 3}$ there is some $\beta$ so that every minimal filling ($C_r\rightarrow D\rightarrow X$) with $e^0_1X\geq \epsilon$ and $\chi Z\leq1$ for every connected $Z\subseteq X$ has $$f^2D<\beta(r+f^2(D-D_{\leq 0})).$$  
\end{lemma}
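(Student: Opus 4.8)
The plan is to use Lemma 3.3 as the engine and to treat $D_{\leq 0}$, the non-positively weighted part of the minimal filling, as the region on which that isoperimetric inequality can be invoked, letting everything else be absorbed into the term $f^2(D-D_{\leq 0})$. Writing $g\colon D\to X$ for the filling map, I first record that every connected subcomplex $Z$ of the image $g(D)$ inherits $e^0_1 Z\geq e^0_1 X\geq\epsilon$, since $e^0_1$ is a minimum over subcomplexes, and $\chi Z\leq 1$ by hypothesis; the only remaining condition of Lemma 3.3, namely $LZ\leq 0$, is exactly what the definition of $D_{\leq 0}$ is designed to supply on the non-positive part. Because $f^2 D=f^2 D_{\leq 0}+f^2(D-D_{\leq 0})$, it then suffices to prove a bound of the shape $f^2 D_{\leq 0}<\beta'\bigl(r+3\,f^2(D-D_{\leq 0})\bigr)$ and afterwards absorb constants by taking $\beta=3\beta'+1$.

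I would then excise the $2$-faces of $D-D_{\leq 0}$ and pass to the remaining region $D_{\leq 0}$. The key accounting is that each such face is a triangle and so frees at most three of its edges into the boundary of what remains, whence the total boundary length of $D_{\leq 0}$ is at most $r+3\,f^2(D-D_{\leq 0})$. On each component I would apply Lemma 3.3 to the loops $g\circ\partial D_{\leq 0}$ inside the image $Z=g(D_{\leq 0})$, which satisfies the three hypotheses above and in which these loops bound. Minimality of the whole filling forces each piece of $D_{\leq 0}$ to be a minimal filling of its own boundary, since otherwise one could refill that $2$-chain with fewer faces and lower $f^2 D$; hence $f^2$ of a piece equals the filling area that Lemma 3.3 controls, namely $A(g\circ\partial D_{\leq 0})<\beta' L(\partial D_{\leq 0})$. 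Summing over components gives $f^2 D_{\leq 0}<\beta'\bigl(r+3\,f^2(D-D_{\leq 0})\bigr)$, and combining with $f^2 D=f^2 D_{\leq 0}+f^2(D-D_{\leq 0})$ produces the stated inequality. The base case, in which $f^2(D-D_{\leq 0})=0$ and the entire disk is non-positive, is just Lemma 3.3 applied to $C_r$ in $g(D)$.

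The main obstacle I expect is the topology of the excision. Deleting interior faces of $D-D_{\leq 0}$ can create holes, so the boundary of $D_{\leq 0}$ is a priori only a null-homologous $1$-cycle, whereas Lemma 3.3 asks for genuinely contractible loops; one must therefore present $\partial D_{\leq 0}$ as a union of null-homotopic loops of length still linear in $f^2(D-D_{\leq 0})$, without paying for long connecting arcs that would destroy the linear bound. This is precisely where the hypothesis $\chi Z\leq 1$ for connected $Z\subseteq X$ does real work, since it forbids any piece from acquiring the sphere-like positive Euler characteristic that would invalidate Lemma 3.3. Arranging the excision as disk-preserving cuts with controlled added boundary, and checking that minimality transfers to every resulting piece, is the step requiring the most care; once it is in place, the constants combine immediately into the claimed estimate.
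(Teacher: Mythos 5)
Your skeleton is the same as the one behind the paper's proof (which simply imports the argument of BHK Lemma 5.9): split off $D_{\leq 0}$, bound its total boundary length by $r+3f^2(D-D_{\leq 0})$, and convert boundary length into area via Lemma 3.3 plus minimality. The boundary accounting is fine. But the step you yourself defer as ``requiring the most care'' is the actual content of the lemma, and the two mechanisms you offer for it do not work as stated. Minimality of a disk filling can only be played against \emph{sub-disks} of $D$: if a component $P$ of $D_{\leq 0}$ has holes, ``refilling that $2$-chain with fewer faces'' does not produce a disk diagram (replacing an annulus by disks on its boundary circles changes the topology of $D$), so $P$ is not ``a minimal filling of its own boundary'' in any sense compatible with $A(\gamma)<\beta L(\gamma)$. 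The repair is to apply minimality to the sub-disk $\hat{P}\subseteq D$ bounded by the \emph{outer} boundary walk $\delta_P$ of $P$, which contains the holes' contents; this gives $f^2P\leq f^2\hat{P}\leq A(g\circ\delta_P)$, and since distinct components are disjoint one may sum $f^2P$ over components with no double counting and with only outer boundaries entering the length estimate. Once this is done the inner boundary loops never need to be filled at all, whereas your appeal to $\chi Z\leq 1$ as the cure for the holes is a red herring: that hypothesis is merely inherited by subcomplexes and fed into Lemma 3.3; it does nothing to bridge null-homology and null-homotopy.

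There is a second gap in how you invoke Lemma 3.3. It requires a complex $Z$ with $LZ\leq 0$ \emph{and} the loop to be contractible in that $Z$, and you have produced neither. The image $g(P)$ of a component need not satisfy $L\leq 0$, because $L$ can increase under passage to subcomplexes (a single triangle has $L=3$); whatever the imported definition of $D_{\leq 0}$ guarantees, it is that $g(P)$ lies \emph{inside} some subcomplex $W$ of the image with $LW\leq 0$, and it is $W$, not $g(P)$, to which Lemma 3.3 must be applied. One must then still show that $g\circ\delta_P$ is contractible in $W$, not merely in $g(D)$ (where it visibly bounds $g|_{\hat{P}}$); this needs an argument, for instance that $g(D)$ collapses onto the union of $W$ with one-dimensional material, so that $\pi_1W$ injects as a free factor and null-homotopy descends to $W$. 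Without the sub-disk surgery, the disjointness bookkeeping, and the contractibility-in-$W$ step, what you have is a correct plan with its central steps missing --- and those steps are precisely what the proof cited by the paper supplies.
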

\begin{proof}
See the proof of BHK 5.9 and the definition before 5.5 in BHK.
\end{proof}

\begin{lemma}If $\epsilon>{1\over 3}$ and ($C_r\rightarrow D\rightarrow X$) is a minimal filling with $e^0_1X\geq \epsilon$ then 
$$f^2(D-D_{\leq 0})\leq{8r\over 9e-3}.$$  
\end{lemma}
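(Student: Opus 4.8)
The plan is to reproduce the argument of BHK~5.10 with the two-face isoperimetric input replaced by its edge analogue, Lemma~3.2. Throughout, $D$ is a disk, so $\chi(D)=1$, and $D_{\leq 0}$ is the subdiagram singled out by the construction preceding BHK~5.5; the assertion is that the cells lying outside it, where the filling carries positive combinatorial curvature, number at most a fixed multiple of the boundary length $r$.

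First I would record the discrete Gauss--Bonnet balance for $D$: distributing $\chi(D)=1$ over the two-faces, the interior vertices, and the boundary so that the three sums add to a fixed constant. The boundary cycle has length comparable to $r$, and each boundary vertex contributes a bounded amount, so the total boundary contribution is at most a linear function of $r$; this is the source of the factor $r$ in the conclusion. The cells of $D_{\leq 0}$ are, by the defining construction, charged against the boundary and the positive cells, so they make no net positive contribution to the balance.

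Next I would establish a uniform lower bound on the curvature carried by each face of $D-D_{\leq 0}$, depending only on the sparsity gap $3e-1$. Here Lemma~3.2 enters: applying it to the image in $X$ of the relevant subdiagrams (which, by minimality of the filling, is taken without loss of generality to fold as little as possible) bounds their edge counts by $\frac{3\chi-3w+L}{3e-1}$, and it is this edge bound, rather than the two-face bound used in BHK, that fixes the denominator. Combining the per-face lower bound with the $O(r)$ boundary estimate and the vanishing contribution of $D_{\leq 0}$ and solving the resulting inequality gives $f^2(D-D_{\leq 0})\leq \frac{8r}{9e-3}$; the denominator $9e-3=3(3e-1)$ is exactly three times the denominator of Lemma~3.2, the signature of having counted edges in place of two-faces.

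The main obstacle is the quantitative bookkeeping that yields the exact numerator $8$. Two issues demand care. First, the sparsity of $X$ lives on honest subcomplexes, whereas $D\rightarrow X$ may fold; transporting the edge bound to $D_{\leq 0}$ requires tracking the weights $\mu$ and the multiplicities of the map, and it is minimality of the filling that keeps this folding from spoiling the estimate. Second, the per-face curvature lower bound must be made sharp---re-deriving, for edges rather than two-faces, the local accounting analogous to the $K_v$, $K_c$, and $K_m$ terms of Section~2---since any slack there degrades the numerator. Once both are pinned down, the bound follows from the Gauss--Bonnet balance in a single step.
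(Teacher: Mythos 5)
Your opening sentence matches the paper's strategy --- the proof is indeed BHK 5.10 rerun with Lemma 3.2 substituted for the two-face counting lemma --- but the argument you then describe is not that argument, and its central step has a genuine gap. The paper's proof (via BHK 5.10) is not a discrete Gauss--Bonnet or discharging argument on the diagram $D$: it is an aggregate counting argument applied to image subcomplexes $X_{ij}^\pi\subseteq X$ associated to the filling, to which Lemma 3.2 is applied piece by piece and then summed. Your plan instead rests on ``a uniform lower bound on the curvature carried by each face of $D-D_{\leq 0}$, depending only on the sparsity gap $3e-1$,'' to be obtained by redoing the $K_v$, $K_c$, $K_m$ accounting of Section 2. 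No such per-face bound follows from the hypothesis: $e^0_1X\geq\epsilon$ constrains only aggregate vertex-to-edge ratios of subcomplexes of $X$ and says nothing about the local combinatorics at a single $2$-cell of $D$; the $K$-calculus of Section 2 belongs to the web classification of Theorem 2.1 and plays no role in Section 3. Likewise $D_{\leq 0}$ is defined (BHK, before 5.5) by the structure of the map $\pi$, not by any curvature normalization; in this lemma its only role is to be excluded from the count, its size being handled separately by Lemma 3.4, so your claim that its cells ``make no net positive contribution to the balance'' is a property you have invented rather than one the construction provides.

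More decisively, you never supply the bridge from edge counts to two-face counts, and that bridge is the entire mathematical content of the paper's proof beyond the citation of BHK. Lemma 3.2 bounds $f^1$ of the image pieces, while the conclusion concerns $f^2(D-D_{\leq 0})$. The paper's proof consists exactly of this conversion: on the pieces one has $LX_{ij}^\pi=2f^1X_{ij}^\pi-3f^2X_{ij}^\pi\geq 1$, hence $(3e-1)f^1X_{ij}^\pi\geq\frac{3}{2}(3e-1)f^2X_{ij}^\pi$, so the boundary-type quantity that BHK's argument bounds by $4r$ now dominates $\frac{3}{2}(3e-1)\sum_{ij}f^2X_{ij}^\pi$, giving $f^2(D-D_{\leq 0})\leq \frac{4r}{\frac{3}{2}(3e-1)}=\frac{8r}{9e-3}$. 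Your remark that $9e-3=3(3e-1)$ is ``the signature of having counted edges in place of two-faces'' names the phenomenon but not the mechanism: absent the inequality $LX_{ij}^\pi\geq 1$ on the image pieces, a bound on their edges bounds nothing about two-faces, and no curvature bookkeeping on $D$ can substitute for it. Thus the two items you defer as ``quantitative bookkeeping'' --- the per-face curvature bound and the numerator $8$ --- are not bookkeeping: the first is unobtainable as stated, and the second is produced precisely by the conversion step your proposal omits.
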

\begin{proof}
See the proof of BHK 5.10 and use $LX_{ij}^\pi=2f^1X_{ij}^\pi-3f^2X_{ij}^\pi\geq 1$ so that 
$(3e-1)f^1X_{ij}^\pi\geq {3\over 2}(3e-1)f^2X_{ij}^\pi$.  
\end{proof}

\begin{lemma}
For every $\epsilon>{1\over 3}$ there is $\lambda$ such that for every $X$ with $e^0_1X\geq\epsilon$, every contractible loop $\gamma:C\rightarrow X$ satisfies $$A\gamma\leq\lambda L\gamma.$$
\end{lemma}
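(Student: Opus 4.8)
The plan is to obtain this linear isoperimetric inequality, and with it the hyperbolicity of $\pi_1$, simply by feeding the two preceding lemmas into each other. Given $\epsilon>{1\over 3}$, I would let $\beta$ be the constant supplied by Lemma 3.4 for this $\epsilon$ and then verify that $\lambda=\beta\bigl(1+\frac{8}{9\epsilon-3}\bigr)$ works. Fix $X$ with $e^0_1X\geq\epsilon$ and a contractible loop $\gamma\colon C\to X$, write $r=L\gamma$ (so $\gamma$ is a map $C_r\to X$ with $L\gamma=r$ the number of boundary edges), and choose a minimal filling $C_r\to D\to X$. By the definition of combinatorial area, minimality gives $A\gamma=f^2D$, so the entire problem reduces to bounding $f^2D$ linearly in $r$.

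For that bound I would apply Lemma 3.4 to get $f^2D<\beta\bigl(r+f^2(D-D_{\leq 0})\bigr)$ and Lemma 3.5 to get $f^2(D-D_{\leq 0})\leq\frac{8r}{9e-3}$, where $e=e^0_1X\geq\epsilon$ forces $\frac{8}{9e-3}\leq\frac{8}{9\epsilon-3}$. Substituting the second estimate into the first collapses everything to $f^2D<\beta\bigl(r+\frac{8r}{9\epsilon-3}\bigr)=\lambda r$, which is precisely $A\gamma<\lambda L\gamma$, a fortiori the desired $A\gamma\leq\lambda L\gamma$. Note that $\lambda$ depends only on $\epsilon$, as the statement demands, since both $\beta$ and the fraction do.

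The one place requiring care, and the main obstacle, is that Lemma 3.4 carries the extra hypothesis that every connected $Z\subseteq X$ have $\chi Z\leq 1$, and this does \emph{not} follow from $e^0_1X\geq\epsilon$ (the boundary of a tetrahedron has ratio ${2\over 3}$ yet Euler characteristic $2$). I would remove this hypothesis by replacing $X$ with the image $X_D\subseteq X$ of the minimal filling: subcomplexes of $X_D$ are subcomplexes of $X$, so $e^0_1X_D\geq e^0_1X\geq\epsilon$, the diagram $D$ is still a minimal filling into $X_D$, and $\gamma$ still lands in $X_D$. It then remains to argue that the efficiency of a least-area diagram prevents its image from wrapping over a $2$-sphere, so that by Theorem 1.2 every connected $Z\subseteq X_D$ is homotopy equivalent to a wedge of circles and real projective planes and hence has $\chi Z=1-(\#\text{circles})\leq 1$. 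Making this reduction precise, namely showing that $2$-cells covering a spherical subcomplex could be deleted without disturbing the boundary, contradicting minimality, is the delicate point; once it is in hand the arithmetic combination above is immediate.
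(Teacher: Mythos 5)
Your skeleton — reduce to a minimal filling so that $A\gamma=f^2D$, then feed Lemma 3.5 into Lemma 3.4 to get $f^2D<\beta\left(1+\frac{8}{9\epsilon-3}\right)r=\lambda r$ — is exactly the argument of BHK 3.7 that the paper's proof cites, and you have correctly located the crux: discharging the hypothesis of Lemma 3.4 that every connected $Z\subseteq X$ has $\chi Z\leq 1$. But your way of discharging it is a genuine gap, not a deferred detail. The claim that a least-area diagram cannot have image containing a spherical subcomplex is the entire difficulty, and the sketched surgery (``delete the $2$-cells covering the sphere without disturbing the boundary'') does not work as stated: the faces of $D$ mapping into a spherical $S\subseteq\mathrm{Im}(\pi)$ need not form an excisable subdiagram, and removing them destroys the property that $D$ is a disk with boundary $C_r$. ``Subtracting a sphere'' is an operation on $2$-chains, i.e.\ it modifies a homological filling, whereas $A\gamma$ is a homotopical (van Kampen diagram) quantity; a reduced least-area diagram can cover faces of the target with multiplicity without containing any cancellable pair, and this $\pi_2$-versus-$H_2$ discrepancy is precisely why no simple excision is available. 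Moreover, what Lemma 3.4 actually requires is stronger than what you aim to prove: by Theorem 1.2, a connected $Z\subseteq\mathrm{Im}(\pi)$ with $\chi Z\geq 2$ is a wedge with more sphere summands than circle summands, and nothing in Theorem 1.2 says such a summand is realized by an honest spherical subcomplex that your surgery could target.

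The paper's route (that of BHK 3.7, and visible in the paper's own proof of Lemma 3.7) sidesteps all of this by choosing where to fill \emph{before} taking a minimal filling. The second half of Theorem 1.2 (invoked in Lemma 3.7 as ``the second part of Theorem 2.1'') provides a subcomplex $Z\subseteq X$ still containing $\gamma$, whose inclusion induces an isomorphism on $\pi_1$, and in which every connected subcomplex has Euler characteristic at most one. Since $\gamma$ is contractible in $X$ it is contractible in $Z$; take a minimal filling of $\gamma$ in $Z$, where Lemmas 3.4 and 3.5 apply verbatim (note $e^0_1Z\geq e^0_1X\geq\epsilon$) and give $f^2D<\lambda r$; and since every filling in $Z$ is also a filling in $X$, one concludes $A_X\gamma\leq A_Z\gamma<\lambda L\gamma$. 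Your constant $\lambda$ and all the arithmetic survive unchanged; the missing idea is that the freedom to pick the subcomplex in which one fills makes the sphere problem disappear, whereas controlling the image of a minimal filling in $X$ itself is both unproven and unnecessary.
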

\begin{proof}
See the proof of BHK 3.7.
\end{proof}
\begin{lemma}
If $X$ has an embedded cycle $\gamma:C_6\rightarrow X$ having $\gamma(2i)=i$
for every $i\in\{1,2,3\}$ and $e^0_1X_3>{1\over 3}$ then $\gamma$ is not contractible in $X$.  
\end{lemma}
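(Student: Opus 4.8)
The plan is to argue by contradiction: assume $\gamma$ is contractible and then exhibit a subcomplex $Y$ with $\{1,2,3\}\subseteq Y\subseteq X$ satisfying $\frac{f^0Y-3}{f^1Y}\le\frac13$, which directly contradicts $e^0_1X_3>\frac13$. First I would pass from contractibility to a combinatorial filling: a contractible loop extends over the disk, and after simplicial approximation one obtains a triangulated disk $D$ with a simplicial map $g:D\to X$ restricting to $\gamma$ on $\partial D=C_6$. I would take $g$ to be a minimal filling in the sense used in Lemmas 3.4 and 3.5. The three boundary vertices of $D$ at positions $0,2,4$ map to $1,2,3$, and these are distinct precisely because $\gamma$ is embedded, so the image $Y=g(D)$ is a connected subcomplex with $\{1,2,3\}\subseteq Y$.

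The heart of the argument is an Euler-characteristic identity for the disk that is exactly tuned to the exponent $\frac13$ and to the three marks. For any triangulated disk $D$ whose boundary is a hexagon one has $LD=2f^1D-3f^2D=6$: counting (triangle, edge) incidences, each interior edge lies in two triangles and each of the six boundary edges in one, so $3f^2D=2(\text{interior edges})+6$ while $f^1D=(\text{interior edges})+6$, whence $LD$ equals the number of boundary edges. Combining this with $\chi D=1$ gives $f^0D=3+\frac13 f^1D$, that is $3(f^0D-3)=f^1D$, so $\frac{f^0D-3}{f^1D}=\frac13$ exactly. Thus an \emph{embedded} hexagonal filling already realizes the forbidden ratio and contradicts $e^0_1X_3>\frac13$. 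This is precisely where the hypotheses enter: the six edges of $C_6$ and the three marks $\{1,2,3\}$ conspire to hit the threshold $\frac13$ rather than merely to bound it, which is why a hexagon (and not some other cycle) with three marked alternate vertices is the right object.

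The main obstacle is that $g$ need not be injective, so $Y$ is a quotient of $D$ rather than a copy of it, and I must still verify that the image satisfies $3f^0Y-f^1Y\le 9$. The delicate point is that identifying two vertices of $D$ sharing $k$ common neighbors lowers $f^0$ by $1$ but $f^1$ by $k$, changing $3f^0-f^1$ by $k-3$, which is positive as soon as $k\ge4$; in principle a wasteful ``double fan'' collapsing onto a single vertex of $X$ could push the ratio back above $\frac13$. I would exclude this using minimality of the filling together with the analysis already set up in Lemmas 3.4 and 3.5: the degenerate, collapsing part of a minimal filling lies in $D_{\le 0}$, whose image is controlled, while on the essential part $D-D_{\le 0}$ no such identifications can survive, since any vertex identification producing $k\ge4$ parallel edge pairs yields a strictly smaller filling and so contradicts minimality. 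This would give $3f^0Y-f^1Y\le 3f^0D-f^1D=9$, hence $\frac{f^0Y-3}{f^1Y}\le\frac13$, completing the contradiction. Checking that minimality really rules out \emph{every} such bad identification, and not merely the obviously reducible folds of adjacent triangles, is the step I expect to demand the most care.
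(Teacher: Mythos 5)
Your skeleton (contradiction via a minimal filling, with the hexagon-plus-three-marks numerology tuned exactly to the threshold $\frac13$) is the same as the paper's, and your disk computation is correct: $\chi D=1$ and $LD=2f^1D-3f^2D=6$ give $3f^0D-f^1D=9$ exactly. But the step you flag as ``the step I expect to demand the most care'' is a genuine gap, and it cannot be closed with the tools you allow yourself. Using the identity $3f^0Y-f^1Y=3\chi Y+LY$ (valid for any $2$-complex, since $\chi=f^0-f^1+f^2$ and $L=2f^1-3f^2$), your target inequality $3f^0Y-f^1Y\leq 9$ for $Y=\im(\pi)$ splits into two independent bounds: $L(\im(\pi))\leq 6$ and $\chi(\im(\pi))\leq 1$. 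The first one is what minimality actually buys: by BHK Lemma 5.4 a minimal filling is a $1$-immersion (no folds of adjacent triangles), which yields $L(\im(\pi))\leq LD\leq 6$; this is the rigorous version of your ``no surviving identifications on $D-D_{\leq 0}$'' heuristic. The second bound does \emph{not} follow from minimality, and nothing in your proposal supplies it. Minimality excludes local folds; it does not exclude the filling wrapping over closed-surface pieces of $X$, a global phenomenon invisible to your vertex-by-vertex accounting (in particular, your claim that an identification with $k\geq 4$ common neighbours ``yields a strictly smaller filling'' is unjustified: such identifications need not be removable by any local move). Moreover the hypothesis $e^0_1X_3>\frac13$ does not forbid such pieces: a triangulated sphere $S\subseteq X$ containing at most one of the vertices $1,2,3$ is perfectly consistent with it (e.g.\ for $Y=S\cup\{1,2\}$ one computes $\frac{f^0Y-3}{f^1Y}=\frac13+\frac1{f^1S}>\frac13$), yet if $\im(\pi)$ contained such a sphere one could have $\chi(\im(\pi))=2$ and $3\chi+L$ as large as $12$, and your contradiction evaporates.

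The paper closes exactly this hole with its main structure theorem, which your proposal never invokes. Since $e^0_1X_3>\frac13$ forces $\frac{f^0Y}{f^1Y}>\frac13$ for \emph{every} subcomplex $Y$, the second part of Theorem 1.2 (Theorem 2.1) gives a subcomplex $Z\subseteq X$ whose inclusion induces an isomorphism on $\pi_1$ and in which every connected subcomplex has $\chi\leq 1$ (a wedge of circles and real projective planes, with no sphere summands). Contractibility of $\gamma$ is first transported from $X$ to $Z$ through that isomorphism, and only then is the minimal filling taken, \emph{inside $Z$}: now $\chi(\im(\pi))\leq 1$ is automatic, $L(\im(\pi))\leq 6$ comes from the $1$-immersion property, and Lemma 3.2 with $w=3$ gives $f^1(\im(\pi))\leq\frac{3-9+6}{3e^0_1-1}=0$, which is absurd since the image contains the six edges of $\gamma$. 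So the missing idea is not a finer analysis of identifications in a minimal filling over $X$; it is the reduction to $Z$, i.e.\ precisely the structure theorem that constitutes the bulk of the paper.
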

\begin{proof}
(See the proof of BHK 3.13)  If $\gamma=(1,a,2,b,3,c)$ is a contractible cycle in 
$X$ then by the second part of theorem 2.1 there is $Z\subseteq X$ such that every 
connected $Z'\subseteq Z$ has $\chi Z'\leq 1$ and $\gamma$ is contractible in $Z$.  
Let ($C_6\rightarrow D\rightarrow Z)$ be a minimal filling of $\gamma$ in $Z$.  
By BHK Lemma 5.4, $\pi$ is a 1-immersion so that no images of interior edges 
contribute positively to $L\gamma$ and $$L(\hbox{Im}(\pi))\leq LD\leq 6.$$  
By Lemma 3.1 there is $$f^1(\hbox{Im}(\pi))\leq {3\chi(\hbox{Im}(\pi))-3\cdot 3 +L(\hbox{Im}(\pi))\over 3e^0_1(\hbox{Im}(\pi))_3-1}\leq{3-9+6\over\cdots}=0.$$
This is a contradiction and $\gamma$ is not contractible in $X$.  
\end{proof}
\begin{definition}(See BHK Definition 3.9)
A $2$-dimensional simplicial complex $X$ is $(\epsilon^0_1, m, r)$-sparse if every $2$-dimensional simplicial subcomplex $Z\subseteq X$ containing the vertices $\{1,\ldots ,r\}$ with $f^0Z\leq m$ satisfies $e^0_1Z_r<\epsilon$.
It is $(\epsilon^0_1, m, r)$-full if every such complex $Z$ occurs as a subcomplex of $X$.  
\end{definition}

\begin{lemma}
If $m$ and $r$ are positive integers, $\epsilon>0$ and every $p_n\leq n^-\epsilon$ then $K(n,p_n)$ is aas $(\epsilon^0_1, m, r)$-sparse, while if every $p_n\geq n^-\epsilon$ then $K(n,p_n)$ is aas $(\epsilon^0_1, m, r)$-full.  
\end{lemma}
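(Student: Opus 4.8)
The plan is to reduce both assertions to statements about the finitely many rooted isomorphism types of subcomplexes on at most $m$ vertices, and then to dispatch the two directions by the first and second moment methods respectively. Since there are only finitely many two-dimensional simplicial complexes $Z$ with $f^0 Z\le m$ carrying a labelling of $r$ of their vertices by $\{1,\ldots,r\}$, a single union bound over these rooted types will suffice once each individual type is understood. Throughout I would exploit that the density quantity $e^0_1$ depends only on the $1$-skeleton, so that the clique structure of $K(n,p_n)$ enters only through the independent appearance of edges.

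For the sparse direction I would argue by the first moment. The complex $K(n,p_n)$ can fail the sparseness condition only through the presence of a rooted subcomplex $Y$ on at most $m$ vertices, containing $\{1,\ldots,r\}$, that is too dense, namely with $\frac{f^0 Y-r}{f^1 Y}<\epsilon$; such a $Y$ is itself a witnessing $Z$. Because edges are present independently with probability $p_n$, the expected number of root-preserving embeddings of a fixed such $Y$ is of order $n^{f^0 Y-r}\,p_n^{\,f^1 Y}$. With $p_n\le n^{-\epsilon}$ this is at most $n^{(f^0 Y-r)-\epsilon f^1 Y}$, and the hypothesis $f^0 Y-r<\epsilon f^1 Y$ makes the exponent strictly negative, so the expectation tends to $0$. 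Summing over the finitely many rooted types shows that aas no such $Y$ occurs.

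For the full direction I would use the second moment. Fix an admissible rooted type $Z$, meaning one with $\frac{f^0 Y-r}{f^1 Y}\ge\epsilon$ for every rooted $Y\subseteq Z$, and let $N_Z$ count its root-preserving embeddings into $K(n,p_n)$. As above $\mathbb{E} N_Z$ is of order $n^{f^0 Z-r}\,p_n^{\,f^1 Z}$, and $p_n\ge n^{-\epsilon}$ together with $e^0_1 Z_r\ge\epsilon$ forces $\mathbb{E} N_Z\to\infty$; here the finitely many possible values of $\frac{f^0-r}{f^1}$ allow me to take $\epsilon$ off this finite set of boundary rationals so that the inequality is strict. I would then bound $\mathbb{E}[N_Z^2]$ by summing over the intersection patterns of two copies, where two copies overlap in a rooted subcomplex $Y\supseteq\{1,\ldots,r\}$. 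Relative to the disjoint pattern, the contribution of each pattern is of order $n^{-(f^0 Y-r)}\,p_n^{-f^1 Y}$, and the balancedness $\frac{f^0 Y-r}{f^1 Y}\ge\epsilon$, strict for proper $Y$, makes every nontrivial overlap negligible. Hence $\mathbb{E}[N_Z^2]=(1+o(1))(\mathbb{E} N_Z)^2$, Chebyshev gives $N_Z\ge 1$ aas, and intersecting over the finitely many admissible types yields fullness.

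The first moment step is routine; the main obstacle is the variance estimate in the full direction, where one must verify that the strictly balanced condition $e^0_1 Z_r\ge\epsilon$ genuinely dominates every overlap term. This is exactly the classical balanced rooted-extension phenomenon, and it is the place where the hypothesis on $e^0_1$ is used essentially. I would also record that the clique-complex structure causes no difficulty: realizing $Z$ as a subcomplex only requires its edges to be present, its triangular $2$-faces then appearing automatically, and any extra faces of $K(n,p_n)$ on the chosen vertices do not destroy the subcomplex, so both moment computations reduce to the $1$-skeleton and to $e^0_1$.
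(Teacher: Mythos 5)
Your proposal is correct and takes essentially the same route as the paper: the paper's proof consists of citing BHK Lemma 3.10 for sparsity (a first-moment/union-bound argument over the finitely many rooted types on at most $m$ vertices) and asserting that fullness ``follows from an easy second moment argument,'' which is exactly the two-step structure you carry out, including the reduction to the $1$-skeleton that makes the clique-complex structure harmless. If anything, your write-up is more careful than the paper's, in particular in flagging the boundary issue when $\epsilon$ coincides with one of the finitely many achievable ratios $\frac{f^0Y-r}{f^1Y}$, where strictness is needed for the second-moment estimate to close.
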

\begin{proof}
See the proof of BHK 3.10 for the sparsity.  Full follows from an easy second moment argument.  
\end{proof}

\begin{lemma}
For every $\epsilon>{1\over 3}$ there are $m$ and $\rho$ such that every contractible loop $\gamma:C_r\rightarrow X$ in an $(\epsilon^0_1,m,0)$-sparse complex $X$ satisfies $A(\gamma)<\rho L(\gamma)$.
\end{lemma}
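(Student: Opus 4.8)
The plan is to derive the linear isoperimetric inequality from the globally sparse case, whose ingredients are Lemmas 3.3--3.5 and whose conclusion is packaged as Lemma 3.6, by feeding those estimates the hypotheses they need --- the density bound $e^0_1\ge\epsilon$ and the condition $\chi\le 1$ on connected subcomplexes --- out of the sparsity hypothesis alone. The essential point is that, although a sparse $X$ need not satisfy $e^0_1 X\ge\epsilon$ globally (for the random complexes in the relevant range $e^0_1 X\to 0$), the estimates of Lemmas 3.3--3.5 invoke these hypotheses only on subcomplexes that turn out to have at most $m$ vertices, and those are precisely the subcomplexes that sparsity controls.

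Concretely, I would take a contractible $\gamma:C_r\to X$ with a minimal filling $\pi:D\to X$ and run the decomposition of BHK 5.9--5.10 behind Lemmas 3.4 and 3.5, so that $f^2(D-D_{\le 0})\le 8r/(9\epsilon-3)$. The remaining estimate applies Lemma 3.3 to connected subcomplexes $Z$ with $\chi Z\le 1$ and $LZ\le 0$; for any such $Z$ with $e^0_1 Z\ge\epsilon$, Lemma 3.2 forces $f^1 Z\le 3/(3\epsilon-1)$ and hence $f^0 Z\le 1+3/(3\epsilon-1)$. I would set $m=1+3/(3\epsilon-1)$, enlarged to absorb the finitely many auxiliary configurations of the BHK estimates. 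Then for an $(\epsilon^0_1,m,0)$-sparse $X$, sparsity gives $e^0_1\ge\epsilon$ on every subcomplex of at most $m$ vertices, and since such a subcomplex then satisfies the hypothesis of Theorem 1.2 its connected pieces have $\chi\le 1$ after passing, as in Lemma 3.7, to the $\pi_1$-isomorphic wedge of circles and projective planes given by the second part of Theorem 2.1. With both hypotheses in hand Lemma 3.3 bounds the area of each $Z$ by its boundary length, and assembling as in Lemma 3.4 yields $f^2 D<\beta\big(r+f^2(D-D_{\le 0})\big)\le\rho r$ with $\rho=\beta\big(1+8/(9\epsilon-3)\big)$.

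The step I expect to be the genuine obstacle is closing the circularity this outline conceals: the size bound $f^0 Z\le m$ was extracted from $e^0_1 Z\ge\epsilon$ via Lemma 3.2, whereas sparsity supplies $e^0_1 Z\ge\epsilon$ only once $Z$ is known to have at most $m$ vertices, and local sparsity does not propagate to a global bound. The loop must be broken by a minimality argument rather than asserted: following the pattern of Section 2, I would induct on $f^2 D$, observing that if some $Z$ demanded by Lemma 3.3 had $e^0_1 Z<\epsilon$ then it would contain a witness $Y$ with $f^0 Y/f^1 Y<\epsilon$, and either $Y$ already has at most $m$ vertices, contradicting sparsity outright, or a slide-and-delete move in the spirit of Lemmas 2.10--2.11 strictly lowers $f^2 D$, contradicting minimality. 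Verifying that such a move is always available for the pieces produced by the BHK cutting, so that no piece is ever asked for its density or curvature bound before its size has been controlled, is the one point needing real care; with it the remaining bookkeeping is that of BHK.
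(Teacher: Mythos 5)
You have correctly identified the crux --- sparsity controls only subcomplexes on at most $m$ vertices, while nothing bounds the size of a minimal filling or of the pieces that Lemmas 3.3--3.5 need to interrogate --- but the mechanism you propose for breaking this circularity is not the one the paper relies on, and as sketched it cannot be made to work. The paper's proof is BHK's proof of Lemma 3.12 with the $\lambda$-lemma (the global isoperimetric inequality for complexes with $e^0_1\ge\epsilon$; your Lemma 3.6, cited by the paper's one-line proof as ``Lemma 3.5'' with its off-by-one numbering) substituted for BHK Lemma 3.7. The engine of that proof is an ingredient entirely absent from your outline: Gromov's local-to-global principle for isoperimetric inequalities, which BHK state as a theorem and invoke in proving their Lemma 3.12. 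It asserts that it suffices to verify $A(\gamma)\le\lambda L(\gamma)$ for loops admitting fillings of area at most a cutoff $K$ determined by $\lambda$; the linear inequality for all contractible loops, with a worse constant $\rho$, then follows. This dissolves the circularity at once: for a minimal filling $\pi:D\rightarrow X$ with $f^2D\le K$ (and, in the nontrivial case, $L(\gamma)\le K/\lambda$), the image $\mathrm{Im}\,\pi$ is a subcomplex on at most roughly $3K+K/\lambda$ vertices, so taking $m$ of that size --- not your $m=1+3/(3\epsilon-1)$, which is extracted from the wrong consideration --- sparsity yields $e^0_1(\mathrm{Im}\,\pi)\ge\epsilon$, the $\lambda$-lemma applied to $\mathrm{Im}\,\pi$ (in which $\gamma$ is contractible, via $D$ itself) gives $A(\gamma)\le\lambda L(\gamma)$, and the local-to-global principle finishes. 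No subcomplex of uncontrolled size is ever asked for its density, and the pipeline of Lemmas 3.2--3.5 is never rerun: the $\lambda$-lemma is used as a black box on the small image.

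Your replacement device --- induct on $f^2D$ and claim that a witness $Y$ with $f^0Y/f^1Y<\epsilon$ on more than $m$ vertices permits a slide-and-delete move strictly reducing $f^2D$ --- fails on both prongs. A large dense witness contradicts nothing: an $(\epsilon^0_1,m,0)$-sparse complex can perfectly well contain such a $Y$, and in the random regime it aas does ($X$ itself is one, since $e^0_1X\rightarrow 0$, as you yourself note); so your dichotomy ``small witness, contradicting sparsity, or else a move reducing area'' has no reason to have a second horn. Moreover, the moves of Lemmas 2.10--2.11 belong to the minimal-counterexample scheme of Section 2, where one is free to replace the ambient web by a homotopy-equivalent modification of it; here $X$ is fixed and cannot be modified, and minimality of $D$ means minimal area among fillings of $\gamma$ in $X$, so the presence of a large dense piece in the image gives no grounds for an area-reducing modification of $D$ to exist. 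Excluding large minimal fillings of short loops by direct combinatorial surgery is precisely what one cannot do; that exclusion is the content of the local-to-global theorem, and without it (or an equivalent) your induction never gets a base to stand on.
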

\begin{proof}
See the proof of the first part of Lemma 3.12 in BHK and use Lemma 3.5 in place of BHK Lemma 3.7. \end{proof}

\smallskip\noindent{\it Proof of Theorem 1.1:}
Since $\epsilon<{1\over 2}$ Lemma 3.9 with $r=3$ implies that $K(n,p_n)$ has aas a cycle $\gamma:C_6\rightarrow X$ with $\gamma(2i)=i$ for $i\in\{1,2,3\}$.  
By Lemma 3.6 $\gamma$ is aas not contractible in $X$.  
\hfill$\square$\medskip

I am assured by Matthew Kahle that the arguments his paper [4] give an aas spectral gap larger than ${1\over 2}$ for appropriate Laplacians at all vertex links of $K(n,p_n)$ if $p_n\geq n^{-{1\over 2}+\epsilon}$ and that this together with a Garland type argument of \.Zuk imply
\begin{theorem}
If $\epsilon>0$ and $n^{-{1\over 2}+\epsilon}\leq p_n\leq n^{-{1\over 3}-\epsilon}$ then $\pi_1(K(n,p_n))$ aas has Kazhdan's property T. 
\end{theorem}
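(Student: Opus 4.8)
The plan is to combine a Garland-type local-to-global spectral criterion with the fact that the vertex links of $K(n,p_n)$ are themselves random graphs. The criterion I would use is \.Zuk's Garland-type theorem: if a group $\Gamma$ acts freely, properly discontinuously and cocompactly by simplicial automorphisms on a simply connected two dimensional simplicial complex $\tilde X$ all of whose vertex links are connected and satisfy $\lambda_1(\lk v)>{1\over 2}$, where $\lambda_1$ denotes the smallest positive eigenvalue of the normalized Laplacian of the link graph, then $\Gamma$ has property T.  I would apply this to $\Gamma=\pi_1(K(n,p_n))$ acting on the universal cover $\tilde X$ of $X=K(n,p_n)$, which is simply connected and two dimensional and carries a free cocompact action since $X$ is finite.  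Because the covering projection $\tilde X\to X$ restricts to an isomorphism on each vertex link, it suffices to show that aas every vertex link of $X$ is connected with $\lambda_1>{1\over 2}$.

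The next step is to identify these links.  The link of a vertex $v$ in the clique complex $X$ has vertex set the neighbors $N(v)$ of $v$ in the underlying random graph, with $u,w\in N(v)$ joined exactly when $uw$ is an edge, that is when $\{v,u,w\}$ is a triangle of $X$.  As the edges inside $N(v)$ are chosen independently of the edges incident to $v$, conditioning on $|N(v)|=m$ makes $\lk v$ an Erd\H{o}s--R\'{e}nyi graph $G(m,p_n)$.  Here $m$ concentrates around $np_n$, so the internal expected degree is of order $mp_n\approx np_n^2$.  The hypothesis $p_n\geq n^{\epsilon-{1\over 2}}$ gives $np_n^2\geq n^{2\epsilon}\to\infty$, and this is precisely the regime in which the normalized Laplacian of $G(m,p_n)$ has all of its nonzero eigenvalues concentrated near $1$; in particular $\lambda_1\to 1$, comfortably above ${1\over 2}$.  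Since $\lambda_1>0$ already forces the graph to be connected, this simultaneously supplies the required connectivity of the links.

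The third step is a union bound over the $n$ vertices.  I would invoke the spectral estimates of [4] in a quantitative form giving, for each fixed $v$ with $np_n^2\geq n^{2\epsilon}$, that $\lambda_1(\lk v)\leq{1\over 2}$ with probability $o(1/n)$, and then sum over the $n$ vertices to conclude that aas every link of $X$ simultaneously has $\lambda_1>{1\over 2}$.  The upper bound $p_n\leq n^{-\epsilon-{1\over 3}}$ plays no part in this spectral argument; it serves only to keep $p_n$ below the simple connectivity threshold $n^{-{1\over 3}}$, so that the resulting group is nontrivial, and indeed Theorem 1.1 already furnishes nontriviality of $\pi_1(K(n,p_n))$ throughout this range.

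The main obstacle is the uniform, union-bound-ready control of the spectral gap in the third step.  What is needed is not merely that a single link has $\lambda_1>{1\over 2}$ with high probability, but that the failure probability decays faster than $1/n$ even near the threshold $p_n\approx n^{-{1\over 2}}$, where the links $G(np_n,p_n)$ sit only barely above their own connectivity and spectral-gap thresholds and the internal degree $np_n^2$ grows only polynomially slowly.  This is exactly where the estimates of [4] do the real work: the large-deviation bounds on the extreme Laplacian eigenvalues of a sparse random graph must be strong enough to survive multiplication by the linear number of vertices, and obtaining such bounds at the critical density is the delicate point of the whole argument.
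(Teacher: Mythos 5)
Your proposal takes essentially the same route as the paper, which proves this theorem by exactly the combination you describe: spectral gap estimates from [4] giving $\lambda_1>{1\over 2}$ aas at every vertex link (the links being Erd\H{o}s--R\'{e}nyi graphs in the regime $np_n^2\geq n^{2\epsilon}$), fed into a Garland-type criterion of \.Zuk applied via the fundamental group's action on the universal cover. In fact the paper offers only a one-sentence justification (attributing the link estimates to a communication from Kahle), so your write-up is more detailed than the paper's own, and correctly isolates the genuinely delicate point---that the single-link failure probability must be $o(1/n)$ to survive the union bound near $p_n\approx n^{-{1\over 2}}$.
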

\section{questions}
\noindent 
Write $K_4(n,p)$ for the measure on cell complexes given by adding a two cell to all possible cycles of length $3$ and of length $4$ in the Erd\H{o}s-R\'{e}nyi random graph $K(n,p)$ and $\pi_1(K_4(n,p))$ for the associated measure on groups.  

\noindent {\bf Question:} For which $\epsilon$ is $\pi_1(K_4(n,n^{-\epsilon}))$aas trivial?

For this question $4$-admissible webs appear to replace the $3$-admissible ones arising in the clique complexes, but the local reduction methods used here do not seem to work as easily.   

\bibliographystyle{amsplain}
%\bibliography{bib}

\def\cprime{$'$} \def\cprime{$'$}
\providecommand{\bysame}{\leavevmode\hbox to3em{\hrulefill}\thinspace}
\providecommand{\MR}{\relax\ifhmode\unskip\space\fi MR }
% \MRhref is called by the amsart/book/proc definition of \MR.
\providecommand{\MRhref}[2]{%
  \href{http://www.ams.org/mathscinet-getitem?mr=#1}{#2}
}
\providecommand{\href}[2]{#2}

\vskip20pt

\noindent {\bf Author:} Eric Babson

\end{document}